\theoremstyle{plain}
\newtheorem{thm}{\protect\theoremname}
  \theoremstyle{definition}
  \theoremstyle{remark}
  \newtheorem{rem}[thm]{\protect\remarkname}
  \theoremstyle{plain}
  \newtheorem{cor}[thm]{\protect\corollaryname}
  \theoremstyle{definition}
  \newtheorem{example}[thm]{\protect\examplename}
\date{}
  \providecommand{\corollaryname}{Corollary}
  \providecommand{\definitionname}{Definition}
  \providecommand{\examplename}{Example}
  \providecommand{\remarkname}{Remark}
\providecommand{\theoremname}{Theorem}
\begin{document}
\lhead{Elliptic Curves with Large Intersection of Projective Torsion Points}\rhead{Bogomolov $\cdot$ Fu}

\title{\textsc{Elliptic Curves with Large Intersection}\\
\textsc{ of Projective Torsion Points}}

\author{\textsc{Fedor Bogomolov $\cdot$ Hang Fu}}
\maketitle
\begin{quote}
\textbf{\small{}Abstract.}{\small{} We construct pairs
of elliptic curves over number fields with large intersection
of projective torsion points.}{\small \par}

\textbf{\small{}Keywords.}{\small{} Elliptic curves $\cdot$ Torsion points $\cdot$ Division
polynomials $\cdot$ Unlikely intersections}{\small \par}

\textbf{\small{}Mathematics Subject Classification.}{\small{} 14H52 $\cdot$ 14Q05}{\small \par}
\end{quote}

\section{Introduction}

Let $E$ be an elliptic curve over a number field, 
\begin{itemize}
\item $E[n]\subseteq E(\bar{\mathbb Q})$ its $n$-torsion subgroup,
\item $E^{*}[n]\subseteq E[n]$ the collection of torsion points of 
order \textit{exactly} $n$, 
\item 
$E[\infty]=\cup_{n=1}^{\infty}E[n]$
the collection of all torsion points, and 
\item $\pi:E\to\mathbb{P}^{1}$ the standard double cover, 
identifying $\pm P\in E$.
\end{itemize}

\medskip{}

In \cite{MR3536148}, the authors considered intersections
$$
\pi_{1}(E_{1}[\infty])\cap \pi_{2}(E_{2}[\infty]),
$$
for distinct elliptic curves
$E_{1}$ and $E_{2}$. They observed the equivalence of conditions
\begin{itemize}
\item[(A)]
$\pi_{1}(E_{1}[2])=\pi_{2}(E_{2}[2])$,
\item[(B)] 
$\pi_{1}(E_{1}[\infty])=\pi_{2}(E_{2}[\infty])$, and 
\item[(C)] 
$\#\pi_{1}(E_{1}[\infty])\cap\pi_{2}(E_{2}[\infty])=\infty$
\end{itemize}
and found pairs of elliptic curves $(E_{1}, E_{2})$ 
failing these conditions but satisfying 
$$
\#\pi_{1}(E_{1}[\infty])\cap\pi_{2}(E_{2}[\infty])\geq 14.
$$
We view results of this type as instances of {\em unlikely intersections}, 
see, e.g., \cite{MR2918151} for an extensive study of related problems.
In this note, we provide some improvements of  
\cite{MR3536148} and \cite{MR2349648}:

\begin{thm}
\label{Theorem 1}
\[
\underset{\{(E_{1},E_{2}):\pi_{1}(E_{1}[2])\neq\pi_{2}(E_{2}[2])\}}{\textup{sup}}\#\pi_{1}(E_{1}[\infty])\cap\pi_{2}(E_{2}[\infty])\geq 22.
\]
\end{thm}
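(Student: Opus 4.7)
The plan is to exhibit a specific pair $(E_1, E_2)$ over a number field $K$ together with a list of $22$ elements of $\mathbb{P}^1(\bar K)$ lying in both $\pi_1(E_1[\infty])$ and $\pi_2(E_2[\infty])$, then verify each incidence via division polynomials.

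First I would normalize. Two curves with $\pi_1(E_1[2]) \neq \pi_2(E_2[2])$ but sharing three of their $2$-torsion projections can be placed in Legendre form $E_i : y^2 = x(x-1)(x-\lambda_i)$ with $\lambda_1 \neq \lambda_2$, so that $\{0, 1, \infty\}$ is automatically in the intersection; this contributes three common $x$-coordinates for free, leaving nineteen to be produced. Every additional common $x$-coordinate $\alpha$ is a common root of some pair $\psi_m^{(\lambda_1)}(x)$ and $\psi_n^{(\lambda_2)}(x)$ of Legendre-family division polynomials, and by $\mathrm{Gal}(\bar K / K)$-equivariance every Galois conjugate of $\alpha$ is also a common $x$-coordinate. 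Thus each irreducible factor $p(x) \in K[x]$ dividing both $\psi_m^{(\lambda_1)}(x)$ and $\psi_n^{(\lambda_2)}(x)$ contributes $\deg p$ common projections at once, and it is enough to find gcd factors whose degrees total at least $19$.

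Next I would search for $(\lambda_1, \lambda_2) \in K^2$ realizing such gcds. Each imposed coincidence is the vanishing of a resultant
\[
\mathrm{Res}_x\bigl(\psi_m^{(\lambda_1)}(x),\ \psi_n^{(\lambda_2)}(x)\bigr) \in K[\lambda_1, \lambda_2],
\]
cutting the parameter plane by an algebraic condition — an instance of an unlikely intersection in the sense of \cite{MR2918151}. Rather than blind enumeration, I would specialize to a sublocus carrying extra algebraic structure: for example, the locus parameterizing a cyclic $N$-isogeny $E_{\lambda_1} \to E_{\lambda_2}$ for small $N$ (a suitable modular curve $X_0(N)$), where the $x$-coordinate map of the isogeny automatically propagates torsion points between the two curves and forces many coincidences simultaneously. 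Alternatively, one may start from the $14$-point example of \cite{MR3536148} and perturb within a similarly constrained family to pick up the remaining eight.

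The principal obstacle is this search step: producing an explicit $(\lambda_1, \lambda_2)$ that actually realizes nineteen additional common coordinates. All other components — Legendre normalization, the Galois-orbit multiplication trick, and final verification by evaluating division polynomials at the $22$ listed coordinates — are mechanical. Once the candidate pair is identified, one lists the $22$ projections as $\{0, 1, \infty\}$ together with roots of a few irreducible gcd factors and checks each satisfies its respective equation $\psi_{m_i}^{(\lambda_1)}(x_i) = 0 = \psi_{n_i}^{(\lambda_2)}(x_i)$, completing the proof.
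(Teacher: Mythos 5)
Your proposal is a search plan rather than a proof: the decisive step --- actually exhibiting a pair $(\lambda_1,\lambda_2)$ realizing nineteen additional common coordinates --- is precisely what you leave open, and the mechanisms you offer for finding it do not work as stated. Restricting to an $X_0(N)$ locus does not ``propagate'' coincidences: an isogeny $E_1\to E_2$ induces a rational map on $x$-coordinates carrying $\pi_1(E_1[\infty])$ \emph{onto} $\pi_2(E_2[\infty])$, but it gives no reason for any single point of $\mathbb{P}^1$ to lie in both sets; by the equivalence of (A)--(C) recalled in the introduction, isogenous curves with distinct $2$-torsion images still have finite intersection, and Corollary \ref{Corollary 10} (that $L_\delta$ is an isogeny invariant) concerns the generated field, not the size of the intersection. ``Perturb the $14$-point example'' is likewise not an argument. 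So nothing in the outline closes the gap you yourself identify.

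The paper's route supplies exactly the structure your Legendre normalization lacks. It works in the quartic family $E_\delta: y^2=x^4-(\delta^2+\delta^{-2})x^2+1$, where every member has $\hat\pi(E_\delta^*[4])=\{0,\infty,\pm1,\pm i\}$ --- six common points for free even though the $2$-torsion images $\{\pm\delta^{\pm1}\}$ are disjoint (your Legendre setup only yields three, and those sit inside the $2$-torsion one is trying to keep distinct) --- and where $\hat\pi(E_\delta[\infty])$ is stable under $x\mapsto -x,\,x^{-1}$, so each new common coordinate arrives in a packet of four. A shared $3$-torsion image $u$ then pins down $\delta_1,\delta_2$ as the two roots of the quadratic $F_3(u,\delta)$; a shared $7$-torsion image $v$ forces $F_3(u,\delta)\mid F_7(v,\delta)$, i.e.\ $C_{7,0}(u,v)=C_{7,1}(u,v)=0$; and the key existence step is a pigeonhole count exploiting the cube multiplicity of a factor in the resultant with respect to $v$ ($120$ admissible $u$-values against $264$ admissible $v$-values), which guarantees some $u$ admitting three companions $v_1,v_2,v_3$. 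This gives $6+4+3\cdot 4=22$. Your general framework (division polynomials, resultants in the parameters, Galois-orbit counting of gcd factors) is compatible with this, but without the $4$-torsion coincidence, the $\pm x^{\pm1}$ symmetry, and the multiplicity-based counting argument, your outline has no path to the required nineteen extra points, so the gap is essential.
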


The proof will be given in Section \ref{Section 2}. 

\medskip{}

In \cite{MR2349648}, 
the first author and Tschinkel studied 
{\em intrinsic} properties of subsets of $\mathbb P^1(\bar{\mathbb Q})$ 
generated by images of torsion points. 
Starting with 
distinct points $a,b,c,d\in\mathbb{P}^{1}(\bar{\mathbb{Q}})$, they
defined 
$L[a,b,c,d]\supseteq\{a,b,c,d\}$ as the smallest subset of 
$\mathbb{P}^{1}(\bar{\mathbb{Q}})$ such
that if 
$\pi(E[2])\subseteq L[a,b,c,d]$, then $\pi(E[\infty])\subseteq L[a,b,c,d]$.
They proved:
\begin{thm}
\label{Theorem 3}If $\delta\not\in\{0,\infty,\pm1,\pm i\}$, then
$L[\pm\delta^{\pm1}]\backslash\{\infty\}$ is a field. Moreover, it is 
closed under taking square roots.
\end{thm}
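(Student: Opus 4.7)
Write $L := L[\pm\delta^{\pm 1}] \setminus \{\infty\}$. I proceed in four stages; the last is the main obstacle.

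\emph{Stage 1 (Möbius invariance).} The Klein four-group $G = \langle x \mapsto -x,\ x \mapsto 1/x\rangle$ preserves $\{\pm \delta^{\pm 1}\}$ setwise. For $g \in G$, the set $g(L[\pm\delta^{\pm 1}])$ contains the seed and is closed under the defining operation of $L[\pm\delta^{\pm 1}]$: a Möbius change of the $x$-coordinate sends an elliptic curve $E$ with $\pi(E[n]) = S$ to an isomorphic curve with $\pi$-image $g(S)$. By minimality of $L[\pm\delta^{\pm 1}]$, $g(L[\pm\delta^{\pm 1}]) = L[\pm\delta^{\pm 1}]$, so $L$ is stable under $x \mapsto -x$ and $x \mapsto 1/x$.

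\emph{Stage 2 (injecting $0, \pm 1, \pm i$).} The hypothesis on $\delta$ makes $E_\delta : y^2 = (x^2-\delta^2)(x^2-\delta^{-2})$ a smooth elliptic curve with $\pi(E_\delta[2]) = \{\pm\delta^{\pm 1}\}$, so $\pi(E_\delta[\infty]) \subseteq L \cup \{\infty\}$. Fix origin $O = (\delta, 0)$, a 2-torsion point of $E_\delta$. The regular involutions $\sigma_1 : (x,y) \mapsto (-x, y)$, $\sigma_2 : (x,y) \mapsto (1/x, y/x^2)$, and $\sigma_3 := \sigma_1 \sigma_2$ of $E_\delta$ send $O$ to $(-\delta, 0)$, $(1/\delta, 0)$, and $(-1/\delta, 0)$ respectively; the exclusion $\delta \notin \{0, \infty, \pm 1, \pm i\}$ guarantees that these target points are each distinct from $O$. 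Hence each $\sigma_j$ has the form $P \mapsto -P + T_j$ with $T_j \in E_\delta[2] \setminus \{O\}$, so its four fixed points are the 4-torsion half-points of $T_j$. Their $x$-coordinates are $0$, $\pm 1$, and $\pm i$ respectively, giving $\{0, \pm 1, \pm i\} \subseteq L$.

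\emph{Stage 3 (closure under square roots).} For $\lambda \in L \setminus \{0, \pm 1\}$, the Weierstrass cubic $E_\lambda : y^2 = x(x-1)(x-\lambda)$ is smooth with $\pi(E_\lambda[2]) = \{0, 1, \lambda, \infty\} \subseteq L \cup \{\infty\}$. The duplication formula shows that any $P$ with $2P = (0,0)$ has $x_P^2 = \lambda$, so $\pm\sqrt\lambda \in L$. The excluded values $\lambda \in \{0, \pm 1\}$ have square roots lying in $\{0, \pm 1, \pm i\} \subseteq L$ by Stage 2.

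\emph{Stage 4 (field structure).} This is the main obstacle. For distinct $a, b, c \in L$, the half-point formula on $y^2 = (x-a)(x-b)(x-c)$ (half-points of $(a,0)$ satisfy $(x-a)^2 = (a-b)(a-c)$) places $u_\pm := a \pm \sqrt{(a-b)(a-c)}$ in $L$; applying Stage 3 to $y^2 = x(x-u_+)(x-u_-)$ yields $\sqrt{u_+ u_-} = \sqrt{a(b+c) - bc} \in L$. Varying $a, b, c$ over $L$, using the symmetries $b \mapsto -b$ and $b \mapsto b^{-1}$ from Stage 1, and iterating the construction (feeding newly produced elements as 2-torsion data of further Weierstrass cubics, and combining with the square-root closure of Stage 3), one shows that $L$ is closed under addition and multiplication; the inverses supplied by Stage 1 then yield the full field structure. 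The technical heart is the bookkeeping: verifying that finitely many applications of these extracted half-point identities together with the symmetries of Stages 1 and 3 suffice to express every prescribed sum and product of elements of $L$.
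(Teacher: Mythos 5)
First, a point of reference: the paper does not prove this statement at all --- it is quoted from Bogomolov--Tschinkel \cite{MR2349648}, with a shortened proof in \cite{2017} --- so your proposal can only be judged on its own completeness. Stages 1--3 are fine and standard: the Möbius-equivariance/minimality argument, the identification of $\{0,\pm1,\pm i\}$ (and in fact $\infty$) as images of $E_\delta^{*}[4]$, and square-root closure via the halves of $(0,0)$ on $y^2=x(x-1)(x-\lambda)$ are all correct, modulo small slips: the fixed points of $\sigma_1$ have $x$-coordinates $0$ and $\infty$ (not just $0$), and you do need $\infty\in L[\pm\delta^{\pm1}]$ --- which you never record --- before you may feed Weierstrass cubics, whose $2$-torsion image contains $\infty$, into the defining closure property in Stages 3 and 4.

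The genuine gap is Stage 4, which is precisely the content of the theorem. What you actually establish there is membership in $L$ of the elements $a\pm\sqrt{(a-b)(a-c)}$ and $\sqrt{a(b+c)-bc}$ for distinct $a,b,c\in L$; the passage from these identities to closure under addition and multiplication is asserted (``varying $a,b,c$ \dots\ iterating \dots\ bookkeeping''), not proved. It is not a routine bookkeeping matter: from the listed operations ($a\mapsto -a$, $a\mapsto 1/a$, $a\mapsto\sqrt{a}$, $(a,b)\mapsto\sqrt{ab}$, and the two half-point identities) there is no evident uniform expression even for $a^{2}$, for $a+b$, or for $a\mapsto 1-a$; note for instance that the half-point pair $u_{\pm}$ has known sum $2a$ and product $a(b+c)-bc$, but extracting a sum or product of two elements of $L$ as an element of $L$ is exactly what is missing, and squaring (which would reduce $ab$ to $\sqrt{ab}$) is nowhere obtained. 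The actual proofs in \cite{MR2349648} and \cite{2017} require further torsion-point constructions and cleverer identities beyond the $2$- and $4$-torsion half-point formulas to generate the ring operations, and that is the part your proposal leaves unproved. As it stands, you have shown that $L$ is closed under inversion, negation, square roots, and the two half-point operations --- but not that it is a field.
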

A shortened version of their proof 
can be found in \cite{2017}. 
In Section \ref{Section 3}, we establish
new properties of this construction.

\section{\label{Section 2}Large Intersection}

In \cite{MR3536148}, we considered torsion points
of order $3$ and $5$. Here, we will instead focus on
torsion points of order $3$ and $p$, where $p$ is a different
prime number. 
The calculations below were performed with Mathematica 11.0
\cite{2016}.

\begin{proof}
[Proof of Theorem \ref{Theorem 1}]Consider the family
\[
E_{\delta}:y^{2}=x^{4}-\left(\delta^{2}+\frac{1}{\delta^{2}}\right)x^{2}+1,
\]
which is a curve of genus $1$ with a unique singularity at $(0:1:0)$,
provided $\delta^{4}\neq0,1$. Fix $O_{\delta}=(\delta,0)$ and 
$\hat{\pi}:\mathbb{P}^{2}\to\mathbb{P}^{1},(x,y)\mapsto x$.
Recall that 
$$
\hat{\pi}(E_{\delta}[2])=\{\pm\delta^{\pm1}\},\quad 
\hat{\pi}(E_{\delta}^{*}[4])=\{0,\infty,\pm1,\pm i\},
$$
and that if $a\in\hat{\pi}(E_{\delta}[\infty])$, then $-a,1/a,-1/a\in\hat{\pi}(E_{\delta}[\infty])$.
The nonsingular model of $E_{\delta}$ is
\[
E_{\delta}^{ns}:Y^{2}=X(X-1)\left(X-\frac{1}{4}\left(\delta+\frac{1}{\delta}\right)^{2}\right),
\]
where
\[
X=\frac{(\delta^{2}+1)(\delta x-1)}{2\delta(x-\delta)},Y=\frac{(\delta^{4}-1)y}{4\delta(x-\delta)^{2}}.
\]
From the division polynomials {[}\citealp{MR0518817}, Chapter II; \citealp{MR2514094}, Page 105, Exercise 3.7{]}
of $E_{\delta}^{ns}$, we know that the third and seventh (modified)
division polynomials of $E_{\delta}$ are
\begin{alignat*}{2}
F_{3}(x,\delta) & \quad & \coloneqq\quad & \prod_{\{\hat{\pi}(P):P\in E_{\delta}^{*}[3]\}}(x-\hat{\pi}(P))\cdot\delta\\
 & \quad & =\quad & 2x^{3}\delta^{2}+(x^{4}-1)\delta-2x,\\
F_{7}(x,\delta) & \quad & \coloneqq\quad & {\displaystyle \prod_{\{\hat{\pi}(P):P\in E_{\delta}^{*}[7]\}}}(x-\hat{\pi}(P))\cdot\delta^{6}\\
 & \quad & =\quad & 64x^{14}\delta^{12}+\cdots+(x^{24}-14x^{20}-305x^{16}-644x^{12}-305x^{8}-14x^{4}+1)\delta^{6}+\cdots+64x^{10}.
\end{alignat*}
Now we want to find $\delta_{1}$ and $\delta_{2}$ such that there
exist 
$$
u\in\hat{\pi}(E_{\delta_{1}}^{*}[3])\cap\hat{\pi}(E_{\delta_{2}}^{*}[3])\quad \text{ and } \quad
v\in\hat{\pi}(E_{\delta_{1}}^{*}[7])\cap\hat{\pi}(E_{\delta_{2}}^{*}[7]).
$$
In other words, 
$$
F_{3}(u,\delta_{1})=F_{3}(u,\delta_{2})=0\quad \text{  and } \quad 
F_{7}(v,\delta_{1})=F_{7}(v,\delta_{2})=0.
$$
Since $F_{3}(x,\delta)$ is a quadratic polynomial in $\delta$, any
fixed $u$ such that $u^{4}\neq0,1$ and $u^{8}+14u^{4}+1\neq0$ gives
exactly two roots satisfying $\delta_{1}^{4},\delta_{2}^{4}\neq0,1$
and $\delta_{1}\neq\pm\delta_{2}^{\pm1}$. Since $\delta_{1}$ and
$\delta_{2}$ are also two roots of $F_{7}(v,\delta)=0$, we need
$F_{3}(u,\delta)$ divides $F_{7}(v,\delta)$ as polynomials in $\delta$.
By long division, this is equivalent to require both $C_{7,0}(u,v)=0$
and $C_{7,1}(u,v)=0$, where $C_{7,0}$ is a polynomial of degree
$58$ with $177$ terms, and $C_{7,1}$ is a polynomial of degree
$62$ with $202$ terms. The resultant of $C_{7,0}$ and $C_{7,1}$ w.r.t. $v$ is
\[
2^{240}u^{900}\left(u^{4}-1\right)^{132}\left(128u^{48}+24352u^{44}+\cdots+24352u^{4}+128\right)\left(u^{72}+16u^{68}+\cdots+16u^{4}+1\right)^{3},
\]
whose roots are the $u$-coordinates of their common points. The resultant of $C_{7,0}$ and $C_{7,1}$ w.r.t. $u$ is
\[
2^{320}v^{692}\left(v^{4}-1\right)^{132}\left(8v^{48}+4776v^{44}+\cdots+4776v^{4}+8\right)\left(v^{216}+690v^{212}+\cdots+690v^{4}+1\right),
\]
whose roots are the $v$-coordinates of their common points. Here
the cube power in the first resultant plays the crucial role. There
are $48+72=120$ nontrivial $u$-coordinates, and $48+216=264$ nontrivial
$v$-coordinates, hence there exists some $u$ that is shared by at
least three common points $(u,v_{k}),k=1,2,3$. Thus we have
\[
\begin{cases}
\hat{\pi}(E_{\delta_{1}}[2])\neq\hat{\pi}(E_{\delta_{2}}[2]),\hat{\pi}(E_{\delta_{1}}^{*}[4])=\hat{\pi}(E_{\delta_{2}}^{*}[4])=\{0,\infty,\pm1,\pm i\},\\
u\in\hat{\pi}(E_{\delta_{1}}^{*}[3])\cap\hat{\pi}(E_{\delta_{2}}^{*}[3]),-u,1/u,-1/u\in\hat{\pi}(E_{\delta_{1}}^{*}[6])\cap\hat{\pi}(E_{\delta_{2}}^{*}[6]),\\
v_{k}\in\hat{\pi}(E_{\delta_{1}}^{*}[7])\cap\hat{\pi}(E_{\delta_{2}}^{*}[7]),-v_{k},1/v_{k},-1/v_{k}\in\hat{\pi}(E_{\delta_{1}}^{*}[14])\cap\hat{\pi}(E_{\delta_{2}}^{*}[14]),k=1,2,3,
\end{cases}
\]
so the supremum is at least $22$.
\end{proof}
\begin{rem}
\label{Remark 4}The same phenomenon continues for larger primes $p=11$,
$13$, and $17$. To accelerate our computer-aided calculation, we
note that $C_{p,0}(u,v)$ and $C_{p,1}(u,v)$ are essentially polynomials
in $u^{4}$ and $v/u$. When $p=11$,
\[
\begin{cases}
\text{the resultant of }C_{11,0}\text{ and }C_{11,1}\text{ w.r.t. }v/u=(\text{trivial factors})(\text{a factor of degree }12)^{3}\\
\cdot(\text{a factor of degree }30)(\text{a factor of degree }150)^{3}\in\mathbb{Z}[u^{4}],\\
\\
\text{the resultant of }C_{11,0}\text{ and }C_{11,1}\text{ w.r.t. }u^{4}=(\text{trivial factors})\\
\cdot(\text{a factor of degree }12)(\text{a factor of degree }24)\\
\cdot(\text{a factor of degree }30)(\text{a factor of degree }450)\in\mathbb{Z}[v/u];
\end{cases}
\]
when $p=13$,
\[
\begin{cases}
\text{the resultant of }C_{13,0}\text{ and }C_{13,1}\text{ w.r.t. }v/u=(\text{trivial factors})(\text{a factor of degree }18)^{3}\\
\cdot(\text{a factor of degree }42)(\text{a factor of degree }324)^{3}\in\mathbb{Z}[u^{4}],\\
\\
\text{the resultant of }C_{13,0}\text{ and }C_{13,1}\text{ w.r.t. }u^{4}=(\text{trivial factors})\\
\cdot(\text{a factor of degree }18)(\text{a factor of degree }36)\\
\cdot(\text{a factor of degree }42)(\text{a factor of degree }972)\in\mathbb{Z}[v/u];
\end{cases}
\]
when $p=17$,
\[
\begin{cases}
\text{the resultant of }C_{17,0}\text{ and }C_{17,1}\text{ w.r.t. }v/u=(\text{trivial factors})(\text{a factor of degree }54)^{3}\\
\cdot(\text{a factor of degree }72)(\text{a factor of degree }1008)^{3}\in\mathbb{Z}[u^{4}],\\
\\
\text{the resultant of }C_{17,0}\text{ and }C_{17,1}\text{ w.r.t. }u^{4}=(\text{trivial factors})\\
\cdot(\text{a factor of degree }54)(\text{a factor of degree }54)(\text{a factor of degree }54)\\
\cdot(\text{a factor of degree }72)(\text{a factor of degree }3024)\in\mathbb{Z}[v/u].
\end{cases}
\]
Here ``a factor'' always means an irreducible factor. From these
facts, we note that for $p=11$, $13$, and $17$, the resultant of
$C_{p,0}$ and $C_{p,1}$ with respect to $v/u$ only involves factors
of multiplicity $1$ and $3$, the former has degree $(p^{2}-1)/4$,
while the latter suggests that as $p\to\infty$, there might be infinitely
many pairs $E_{\delta_{1}}$ and $E_{\delta_{2}}$ with at least $22$
intersection points, and this record might not be broken by the current
approach. Due to the computational limitation, we have no information
for $p\geq19$.
\end{rem}

\begin{rem}
\label{Remark 5}
Let 
$$
J_{k}(n):=n^{k}\prod_{p|n}(1-p^{-k})
$$
denote Jordan's totient function. 
In Appendix of \cite{MR3536148}, the authors attempted
to show that the values $J_{2}(n)$, $J_{4}(n)$, and $J_{6}(n)$ would suffice to
determine $n$. However, as indicated by Prof. Kevin
Ford in a personal email communication, this would contradict 
a plausible conjecture in analytic number theory.
\end{rem}

\section{\label{Section 3}The Fields Generated by the Projective Torsion
Points}

For simplicity, we write $L_{\delta}=L[\pm\delta^{\pm1}]$ and $L_{\mathbb{Q}}=L[0,\infty,1,-1]$.
We have seen that 
$$
\hat{\pi}(E_{\delta}[2])=\{\pm\delta^{\pm1}\}\quad \text{ and } \quad 
\hat{\pi}(E_{\delta}^{*}[4])=\{0,\infty,\pm1,\pm i\},
$$ 
so $L_{\mathbb{Q}}\subseteq L_{\delta}$.
\begin{thm}
The field $L_{\delta}\backslash\{\infty\}$ is closed under taking
cube roots.
\end{thm}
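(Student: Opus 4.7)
Let $a\in L_\delta\setminus\{0,\infty\}$ and write $\alpha := a^{1/3}$ for a chosen cube root; the aim is to show $\alpha\in L_\delta$. The overall plan is to construct an elliptic curve $E$ with $\hat\pi(E[2])\subseteq L_\delta$ and with $\alpha$ a torsion $x$-coordinate of $E$; by the defining property of $L_\delta$, this will force $\alpha\in L_\delta$.

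A first attempt using the family $E_\gamma$ of Section~\ref{Section 2} fails as follows. Its 3-division polynomial $F_3(x,\gamma)=2\gamma^2 x^3+(x^4-1)\gamma-2x$ is quadratic in $\gamma$, and substituting $x=\alpha$ with $\alpha^3=a$ gives $2a\gamma^2+(a\alpha-1)\gamma-2\alpha=0$, whose roots lie in $L_\delta(\alpha)$ but not in $L_\delta$ in general. Similarly, for any Weierstrass curve $y^2=x^3+Ax+B$ with $A,B\in L_\delta$, Galois-invariance of the 3-torsion forces the three conjugates $\omega^k\alpha$ (distinct since $\omega\in L_\delta$) to be simultaneous 3-torsion $x$-coordinates; matching $x^3-a\mid\psi_3(x)$ in the degree-4 polynomial $\psi_3$ collapses the system to $A=0$, $B=-a/4$, and the 2-torsion of the resulting curve $y^2=x^3-a/4$ is $\{\sqrt[3]{a/4}\,\omega^k\}$, essentially the cube roots we are trying to extract.

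To break this circularity I would pass to the 4-parameter family of quartic models $E_{r_1,r_2,r_3,r_4}\colon y^2=\prod_{i=1}^{4}(x-r_i)$ with $r_i\in L_\delta$, so that $\hat\pi(E[2])=\{r_1,\ldots,r_4\}\subseteq L_\delta$ by fiat. Denoting by $\widetilde\psi_3(x;r_1,\ldots,r_4)\in L_\delta[x]$ the (degree-4) polynomial whose roots are the 3-torsion $x$-coordinates of this model, the requirement that $\alpha$ be such a root --- equivalently, via Galois, that $x^3-a\mid\widetilde\psi_3(x)$ --- amounts to three polynomial equations in $(r_1,r_2,r_3,r_4)$, cutting out a 1-dimensional subvariety $V_a\subseteq\mathbb A^4_{L_\delta}$. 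An $L_\delta$-rational point of $V_a$ yields the desired $E$ and hence $\alpha\in L_\delta$.

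The main obstacle is this last step: producing an $L_\delta$-rational point of $V_a$ for every $a\in L_\delta$. I expect the cleanest route to be a specialization to a symmetric subfamily (for example $r_2=-r_1$, $r_4=-r_3$, mirroring the $E_\gamma$ structure but with one extra parameter beyond $\gamma$), in which $\widetilde\psi_3$ factors tractably and the divisibility condition descends to a quadratic equation already solvable in $L_\delta$ by Theorem~\ref{Theorem 3}. As a consistency check, the CM curve $y^2=x^3-1$ has $\hat\pi(E[2])=\{1,\omega,\omega^2,\infty\}\subset L_\delta$ and 3-torsion $x$-coordinates $\{0,\sqrt[3]{4}\,\omega^k\}$, giving $\sqrt[3]{4}\in L_\delta$ and hence $\sqrt[3]{2}=\sqrt{\sqrt[3]{4}}\in L_\delta$; upgrading this from the specific value $\sqrt[3]{4}$ to cube roots of arbitrary $a$ is precisely what the 4-parameter construction above is designed to accomplish.
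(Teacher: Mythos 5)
Your proposal stops short of a proof at exactly the point you yourself flag: you never produce, for a general $a$, an $L_{\delta}$-rational point of $V_{a}$, i.e.\ a quartic model with all four branch points $r_{1},\dots,r_{4}\in L_{\delta}\backslash\{\infty\}$ having the chosen cube root among its $3$-torsion $x$-coordinates; you only express the expectation that some symmetric specialization will make the divisibility condition solvable. Your own preliminary analysis shows why this is delicate: the natural symmetric slices collapse to curves (such as $y^{2}=x^{3}-a/4$) whose $2$-torsion already involves the very cube root you want to extract, and you give no argument that another slice of $V_{a}$ avoids this circularity. As it stands, the argument yields only isolated values such as $\sqrt[3]{4}\in L_{\delta}\backslash\{\infty\}$, not closure under cube roots.

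The missing idea, and the route the paper takes, is to \emph{not} insist that the cube root itself appear as a torsion $x$-coordinate. Since $L_{\delta}\backslash\{\infty\}$ is a field closed under square roots (Theorem \ref{Theorem 3}), for $a\neq0,1$ one may form the curve $E:y^{2}=(x-a)(x-\sqrt{a})(x+\sqrt{a})$, whose $2$-torsion $x$-coordinates $\{a,\pm\sqrt{a},\infty\}$ lie in $L_{\delta}$; by the defining property of $L_{\delta}$, the four roots $\alpha_{1},\dots,\alpha_{4}$ of its third division polynomial then lie in $L_{\delta}\backslash\{\infty\}$ as well. The cube root is harvested not from the $\alpha_{i}$ themselves but from a symmetric combination of them: the resolvent cubic, with roots $\alpha_{1}\alpha_{2}+\alpha_{3}\alpha_{4}$, $\alpha_{1}\alpha_{3}+\alpha_{2}\alpha_{4}$, $\alpha_{1}\alpha_{4}+\alpha_{2}\alpha_{3}$, which automatically lie in the field $L_{\delta}\backslash\{\infty\}$, and which compute explicitly to $-\frac{2}{3}a-\frac{4}{3}a\,\sqrt[3]{(a-1)^{2}}$ for the three determinations of the cube root. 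Hence $\sqrt[3]{(a-1)^{2}}$, and with it $\sqrt[3]{a-1}$, lies in $L_{\delta}\backslash\{\infty\}$; the cases $a=0,1$ are handled by roots of unity and triviality, and replacing $a$ by $a+1$ gives closure under cube roots in general. It is exactly this use of the resolvent cubic, combined with the field structure of $L_{\delta}\backslash\{\infty\}$, that breaks the circularity defeating your direct approach; without such a device (or a concrete construction of the rational point you postulate), your proposal has a genuine gap.
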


\begin{proof}
We will show that if $a\in L_{\delta}\backslash\{\infty\}$, then
$\sqrt[3]{a-1}\subseteq L_{\delta}\backslash\{\infty\}$. This is
trivial for $a=1$. For $a=0$, since $L_{\delta}\backslash\{\infty\}$
contains all of the torsion points of $E_{\delta}$, it also contains
all of the roots of unity \cite[Page 96, Corollary 8.1.1]{MR2514094}.
For $a\neq0,1$, since $\pm\sqrt{a}\in L_{\delta}\backslash\{\infty\}$
by Theorem \ref{Theorem 3}, let us consider the elliptic curve
\[
E:y^{2}=(x-a)(x+\sqrt{a})(x-\sqrt{a})=x^{3}-ax^{2}-ax+a^{2}.
\]
The third (modified) division polynomial of $E$ is
\begin{eqnarray*}
F_{3}(x) & \coloneqq & {\displaystyle \prod_{\{\hat{\pi}(P):P\in E^{*}[3]\}}}(x-\hat{\pi}(P))\\
 & = & x^{4}-\dfrac{4}{3}ax^{3}-2ax^{2}+4a^{2}x-\dfrac{4}{3}a^{3}-\dfrac{1}{3}a^{2}\\
 & = & (x-\alpha_{1})(x-\alpha_{2})(x-\alpha_{3})(x-\alpha_{4}).
\end{eqnarray*}
The resolvent cubic of $F_{3}(x)$ is
\begin{eqnarray*}
rc(x) & = & (x-(\alpha_{1}\alpha_{2}+\alpha_{3}\alpha_{4}))(x-(\alpha_{1}\alpha_{3}+\alpha_{2}\alpha_{4}))(x-(\alpha_{1}\alpha_{4}+\alpha_{2}\alpha_{3}))\\
 & = & x^{3}+2ax^{2}+\dfrac{4}{3}a^{2}x+\dfrac{64}{27}a^{5}-\dfrac{128}{27}a^{4}+\dfrac{8}{3}a^{3}.
\end{eqnarray*}
The roots of $rc(x)=0$ are $x=-\dfrac{2}{3}a-\dfrac{4}{3}a\cdot\sqrt[3]{(a-1)^{2}}$,
so $\sqrt[3]{a-1}\subseteq L_{\delta}\backslash\{\infty\}$.
\end{proof}
\begin{cor}
\label{Corollary 7}
All quadratic, cubic, and quartic equations are
solvable in $L_{\delta}\backslash\{\infty\}$.
\end{cor}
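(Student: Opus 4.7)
The plan is to invoke the classical closed-form solution formulas for polynomial equations of degree at most four and to verify, at each step, that every operation used lies within the closure properties already established for $L_{\delta}\backslash\{\infty\}$. By Theorem \ref{Theorem 3}, $L_{\delta}\backslash\{\infty\}$ is a field closed under square roots, and by the theorem immediately preceding this corollary it is also closed under cube roots. Since the quadratic formula, Cardano's formula, and Ferrari's method all produce roots by iterating field arithmetic, square roots, and cube roots applied to the input coefficients, the conclusion follows once we check that no forbidden operation is needed.

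Concretely, I would proceed degree by degree. For a quadratic $x^{2}+bx+c$ with $b,c\in L_{\delta}\backslash\{\infty\}$, the roots $(-b\pm\sqrt{b^{2}-4c})/2$ belong to $L_{\delta}\backslash\{\infty\}$ by field and square-root closure. For a cubic, I first depress it by a translation whose shift lies in $L_{\delta}\backslash\{\infty\}$ and then apply Cardano's formula to $x^{3}+px+q$: the element $\sqrt{(q/2)^{2}+(p/3)^{3}}$ lies in $L_{\delta}\backslash\{\infty\}$ by square-root closure, so does each of the cube roots $\sqrt[3]{-q/2\pm\sqrt{(q/2)^{2}+(p/3)^{3}}}$, and the three linear combinations (with cube roots of unity, which are torsion points and hence in $L_{\delta}\backslash\{\infty\}$) yielding the roots remain in the field. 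For a quartic, I apply Ferrari's reduction: its resolvent is a cubic with coefficients in $L_{\delta}\backslash\{\infty\}$, which by the cubic case has its roots in $L_{\delta}\backslash\{\infty\}$, and with any one such root the quartic factors into two quadratics over $L_{\delta}\backslash\{\infty\}$, whose roots are then handled by the quadratic step.

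The main obstacle is purely bookkeeping: making sure that no denominator is zero and that no intermediate expression requires us to manipulate $\infty$. Degenerate situations (for instance $p=0$ in the depressed cubic, a repeated resolvent root, or a vanishing leading coefficient after translation) only simplify the formulas, so they impose no new closure demand on $L_{\delta}\backslash\{\infty\}$; all the same, they should be mentioned explicitly so that the proof covers every coefficient tuple without exception.
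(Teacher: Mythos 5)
Your proposal is correct and is exactly the argument the paper intends: the corollary is stated without proof because it follows immediately from Theorem \ref{Theorem 3} (field, closed under square roots), the preceding theorem (closed under cube roots), and the classical radical formulas for degrees $2$, $3$, $4$, which you spell out carefully. One small correction: the cube roots of unity are not torsion points of $E_{\delta}$ themselves, but they do lie in $L_{\delta}\backslash\{\infty\}$, either by square-root closure (since $\zeta_{3}=(-1+\sqrt{-3})/2$) or by the Weil-pairing fact quoted in the proof of the preceding theorem that the field contains all roots of unity.
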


\begin{example}
For any $\delta\in L_{\mathbb{Q}}\backslash\{0,\infty,\pm1,\pm i\}$,
we have $L_{\delta}=L_{\mathbb{Q}}$, so $L_{\mathbb{Q}}$ is the
smallest $L_{\delta}$.
\end{example}

\begin{example}
In the constructions of Theorem \ref{Theorem 1} and Remark \ref{Remark 4},
we require that
$$
u\in\hat{\pi}(E_{\delta_{1}}^{*}[3])\cap\hat{\pi}(E_{\delta_{2}}^{*}[3]),
$$
which implies $\delta_{1}\delta_{2}=-1/u^{2}$. Thus $L_{\delta_{1}}=L_{\delta_{2}}$.
\end{example}

Another example shows that $L_{\delta}$ is an
isogeny invariant.
\begin{cor}
\label{Corollary 10}If $E_{\delta_{1}}$ and $E_{\delta_{2}}$ are
isogenous over $\bar{\mathbb{Q}}$, then $L_{\delta_{1}}=L_{\delta_{2}}$.
\end{cor}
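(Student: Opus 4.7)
My plan is to reduce the statement to proving $\delta_{2}\in L_{\delta_{1}}$. Once this is established, since $L_{\delta_{1}}\backslash\{\infty\}$ is a field containing $-1$ (as $\pm 1\in\hat{\pi}(E_{\delta_{1}}^{*}[4])\subseteq L_{\delta_{1}}$), we have $\{\pm\delta_{2}^{\pm 1}\}\subseteq L_{\delta_{1}}$; because $L_{\delta_{1}}$ itself satisfies the defining closure property of $L$-sets, the minimality of $L_{\delta_{2}}$ yields $L_{\delta_{2}}\subseteq L_{\delta_{1}}$, and symmetry via the dual isogeny gives the reverse inclusion.

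To produce $\delta_{2}$, I would fix an isogeny $\phi:E_{\delta_{1}}\to E_{\delta_{2}}$ with kernel $K\subseteq E_{\delta_{1}}[\infty]$ and work on the nonsingular models $E_{\delta_{i}}^{ns}$. The transformation between $E_{\delta_{1}}$ and $E_{\delta_{1}}^{ns}$ recorded in Section \ref{Section 2} is rational over $\mathbb{Q}(\delta_{1})\subseteq L_{\delta_{1}}$, so the image $K^{ns}$ of $K$ has $X$-coordinates in $L_{\delta_{1}}$; the corresponding $Y$-coordinates satisfy $Y^{2}=X(X-1)(X-\lambda_{1})$ with $\lambda_{1}=\tfrac{1}{4}(\delta_{1}+\delta_{1}^{-1})^{2}\in L_{\delta_{1}}$ and $X\in L_{\delta_{1}}$, and hence also lie in $L_{\delta_{1}}$ by Theorem \ref{Theorem 3}. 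Vélu's formulas then produce a Weierstrass equation $Y^{2}=g(X)$ for $E_{\delta_{1}}^{ns}/K^{ns}$ (isomorphic over $\bar{\mathbb{Q}}$ to $E_{\delta_{2}}^{ns}$) whose coefficients are polynomial in the coordinates of $K^{ns}$, and hence lie in $L_{\delta_{1}}$.

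Now Corollary \ref{Corollary 7} lets me solve the cubic $g(X)=0$ to obtain roots $e_{1},e_{2},e_{3}\in L_{\delta_{1}}$; the cross-ratio $(e_{3}-e_{1})/(e_{2}-e_{1})$ equals one of the six $S_{3}$-conjugates of $\lambda_{2}=\tfrac{1}{4}(\delta_{2}+\delta_{2}^{-1})^{2}$, all of which are rational functions of one another, so $\lambda_{2}\in L_{\delta_{1}}$. Finally $\delta_{2}$ is a root of $\delta_{2}^{2}\mp 2\sqrt{\lambda_{2}}\,\delta_{2}+1=0$, solvable in $L_{\delta_{1}}$ by Theorem \ref{Theorem 3} and Corollary \ref{Corollary 7}.

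The main obstacle I anticipate is avoiding the temptation to extract $\lambda_{2}$ only from $j(E_{\delta_{2}})$: inverting $\lambda\mapsto j(\lambda)$ would require solving a sextic, beyond the reach of Corollary \ref{Corollary 7}. The virtue of the Vélu route is that it hands me the $2$-torsion of the quotient as roots of an explicit cubic with coefficients in $L_{\delta_{1}}$, fitting exactly within the solvability granted by Corollary \ref{Corollary 7}.
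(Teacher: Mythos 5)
Your proof is correct, but it takes a genuinely different route from the paper's. The paper does not construct the quotient explicitly: it cites Silverman's Remark 4.13.2 to conclude that the nonsingular model of $E_{\delta_{2}}$ can be defined over $\mathbb{Q}(E_{\delta_{1}}[\infty])\subseteq L_{\delta_{1}}\backslash\{\infty\}$, hence $j(E_{\delta_{2}})\in L_{\delta_{1}}\backslash\{\infty\}$, and then recovers $\delta_{2}$ from the $j$-invariant alone, using the identity $j(E_{\delta_{2}})=16\bigl(\delta_{2}^{4}+\delta_{2}^{-4}+14\bigr)^{3}/\bigl(\delta_{2}^{4}+\delta_{2}^{-4}-2\bigr)^{2}$: this is a \emph{cubic} in the variable $s=\delta_{2}^{4}+\delta_{2}^{-4}$, so Corollary \ref{Corollary 7} gives $s\in L_{\delta_{1}}\backslash\{\infty\}$, and quadratics plus square roots give $\delta_{2}$. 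So the sextic obstruction you feared (inverting $\lambda\mapsto j(\lambda)$) simply does not arise in the paper's route, because the right variable is $\delta^{4}+\delta^{-4}$ rather than the Legendre $\lambda$. Your alternative--carrying the kernel to the nonsingular model, checking its coordinates lie in $L_{\delta_{1}}\backslash\{\infty\}$ (the $Y$'s by square-root closure from Theorem \ref{Theorem 3}), applying V\'elu's formulas to get a cubic model $Y^{2}=g(X)$ of the quotient with coefficients in $L_{\delta_{1}}\backslash\{\infty\}$, solving $g=0$ by Corollary \ref{Corollary 7}, and extracting $\lambda_{2}$ as the affine-invariant ratio of the $2$-torsion roots up to the $S_{3}$-action--is sound: each step stays within the closure properties the paper has established, and your final reduction ($\delta_{2}\in L_{\delta_{1}}$ implies $L_{\delta_{2}}\subseteq L_{\delta_{1}}$ by minimality, with the dual isogeny for symmetry) is exactly the implicit last step of the paper. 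What your approach buys is explicitness and independence from the displayed $j$-formula (it would work for any model of the isogenous curve, not just the family $E_{\delta}$); what it costs is the extra machinery of V\'elu's formulas, where the paper needs only the field-of-definition remark plus one resolvent-style cubic.
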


\begin{proof}
By \cite[Page 74, Remark 4.13.2]{MR2514094}, the nonsingular model
of $E_{\delta_{2}}$ can be defined over $\mathbb{Q}(E_{\delta_{1}}[\infty])$,
which is a subfield of $L_{\delta_{1}}\backslash\{\infty\}$, so the
$j$-invariant of $E_{\delta_{2}}$,
\[
j(E_{\delta_{2}})=\frac{16\left(\delta_{2}^{4}+\delta_{2}^{-4}+14\right)^{3}}{\left(\delta_{2}^{4}+\delta_{2}^{-4}-2\right)^{2}}\in L_{\delta_{1}}\backslash\{\infty\}.
\]
Thus $\delta_{2}^{4}+\delta_{2}^{-4}$ can be obtained by solving
a cubic equation with coefficients in $\mathbb{Q}(j(E_{\delta_{2}}))$.
By Corollary \ref{Corollary 7}, $\delta_{2}^{4}+\delta_{2}^{-4}\in L_{\delta_{1}}\backslash\{\infty\}$,
and then $\delta_{2}\in L_{\delta_{1}}\backslash\{\infty\}$.
\end{proof}
\begin{cor}
The field $L_{\delta}\backslash\{\infty\}$ is Galois over $\mathbb{Q}(j(E_{\delta}))$.
In particular, the field $L_{\mathbb{Q}}\backslash\{\infty\}$ is
Galois over $\mathbb{Q}$.
\end{cor}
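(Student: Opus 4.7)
The plan is to exhibit $L_\delta \setminus \{\infty\}$ as a Galois-stable algebraic extension of $\mathbb{Q}(j(E_\delta))$, from which the Galois property follows (normality from stability, separability from characteristic zero, being a field from Theorem \ref{Theorem 3}).

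The technical heart is a Galois-equivariance principle for the closure operator defining $L$: for any $\sigma \in \textup{Gal}(\bar{\mathbb{Q}}/\mathbb{Q})$ and any subset $S \subseteq \mathbb{P}^1(\bar{\mathbb{Q}})$, if $S$ satisfies the defining implication ``$\pi(E[2]) \subseteq S \Rightarrow \pi(E[\infty]) \subseteq S$'', then so does $\sigma(S)$. I would establish this by observing that $\sigma$ permutes the pairs $(E,\pi)$ of elliptic curves and degree-$2$ maps to $\mathbb{P}^1$ defined over $\bar{\mathbb{Q}}$, sending $(E,\pi)$ to its conjugate $(E^\sigma,\pi^\sigma)$, and that $\pi^\sigma((E^\sigma)[n]) = \sigma(\pi(E[n]))$. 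Taking the smallest set with the closure property starting from $\{a,b,c,d\}$, one concludes by minimality that
\[
\sigma\bigl(L[a,b,c,d]\bigr) = L[\sigma(a),\sigma(b),\sigma(c),\sigma(d)].
\]

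Applied to $L_\delta = L[\pm\delta^{\pm 1}]$: for $\sigma$ fixing $\mathbb{Q}(j(E_\delta))$, the above identity gives $\sigma(L_\delta) = L_{\sigma(\delta)}$. Since $j(E_{\sigma(\delta)}) = \sigma(j(E_\delta)) = j(E_\delta)$, the curves $E_{\sigma(\delta)}$ and $E_\delta$ are isomorphic, hence isogenous, over $\bar{\mathbb{Q}}$, so Corollary \ref{Corollary 10} yields $L_{\sigma(\delta)} = L_\delta$, and therefore $\sigma(L_\delta) = L_\delta$. For the ``In particular'' clause, the starting set $\{0,\infty,1,-1\}$ for $L_\mathbb{Q}$ is already stable under $\textup{Gal}(\bar{\mathbb{Q}}/\mathbb{Q})$, so the equivariance principle applied directly gives $\sigma(L_\mathbb{Q}) = L_\mathbb{Q}$ for every $\sigma$, whence $L_\mathbb{Q}\setminus\{\infty\}$ is Galois over $\mathbb{Q}$.

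The main obstacle I anticipate is formalizing the equivariance principle cleanly, because the defining property quantifies over all elliptic curves together with embeddings $\pi: E \to \mathbb{P}^1$, and the latter are only canonical up to $\textup{PGL}_2$. The point to address is that Galois conjugation respects this ambiguity: if $\pi'$ is any embedding of a Galois conjugate curve $E'$, one can find an $E$ and $\pi$ with $(E',\pi')=(E^\sigma,\pi^\sigma)$, and then $\pi'(E'[n]) = \sigma(\pi(E[n]))$. Once this bookkeeping is in place, the rest of the argument is essentially a one-line application of minimality together with Corollary \ref{Corollary 10}.
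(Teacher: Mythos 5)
Your proposal is correct and follows essentially the same route as the paper: Galois equivariance of the construction gives $\sigma(L_\delta)\subseteq L_{\sigma(\delta)}$, and since $j(E_{\sigma(\delta)})=\sigma(j(E_\delta))=j(E_\delta)$, Corollary \ref{Corollary 10} forces $L_{\sigma(\delta)}=L_\delta$, yielding Galois stability. You merely make explicit the equivariance step (and treat $L_{\mathbb{Q}}$ by direct stability of $\{0,\infty,1,-1\}$), which the paper leaves implicit in the assertion $\sigma(a)\in L_{\sigma(\delta)}\backslash\{\infty\}$.
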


\begin{proof}
Take $\sigma\in G_{\mathbb{Q}(j(E_{\delta}))}$ and $a\in L_{\delta}\backslash\{\infty\}$,
then $\sigma(a)\in L_{\sigma(\delta)}\backslash\{\infty\}$. Since
$j(E_{\sigma(\delta)})=\sigma(j(E_{\delta}))=j(E_{\delta})$, Corollary
\ref{Corollary 10} implies $L_{\sigma(\delta)}=L_{\delta}$.
\end{proof}
\begin{cor}
If $E$ is an elliptic curve in the Weierstrass form and defined over
a number field $\mathbb{Q}(j(E))$, then $L[\hat{\pi}(E[2])]\backslash\{\infty\}$
is a Galois field extension of $\mathbb{Q}(j(E))$.
\end{cor}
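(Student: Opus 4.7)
The plan is to adapt the proof of the preceding Galois corollary for $L_\delta$. Two ingredients are needed: Galois-equivariance of the $L$-closure, and Galois-stability of the seed $\hat{\pi}(E[2])$ under $G_{\mathbb{Q}(j(E))}$.

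The enlargement step ``if $\hat{\pi}(E'[2]) \subseteq L$ then $\hat{\pi}(E'[\infty]) \subseteq L$'' commutes with any $\sigma \in G_{\mathbb{Q}}$ via the identity $\sigma(\hat{\pi}(E'[n])) = \hat{\pi}(\sigma(E')[n])$, so by induction on the iterated closure one obtains $\sigma(L[S]) = L[\sigma(S)]$ for every $S \subseteq \mathbb{P}^{1}(\bar{\mathbb{Q}})$. Since $E$ is given in Weierstrass form $y^{2} = (x-e_{1})(x-e_{2})(x-e_{3})$ with cubic coefficients in $\mathbb{Q}(j(E))$, the set $\{e_{1},e_{2},e_{3}\}$ is the root set of a polynomial over $\mathbb{Q}(j(E))$ and hence permuted by $G_{\mathbb{Q}(j(E))}$; together with the rational point $\infty$, $\hat{\pi}(E[2])$ is $G_{\mathbb{Q}(j(E))}$-stable. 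Combining gives
\[
\sigma\bigl(L[\hat{\pi}(E[2])]\bigr) = L\bigl[\sigma(\hat{\pi}(E[2]))\bigr] = L[\hat{\pi}(E[2])] \qquad \text{for every } \sigma \in G_{\mathbb{Q}(j(E))}.
\]

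For the field property, I would identify $L[\hat{\pi}(E[2])]$ with $L_{\delta}$ for any $\delta \in \bar{\mathbb{Q}}$ satisfying $j(E_{\delta})=j(E)$ and $\delta \notin \{0,\infty,\pm 1,\pm i\}$; Theorem~\ref{Theorem 3} and the preceding corollary then complete the proof. The forward containment is straightforward: by Corollary~\ref{Corollary 7}, $L_{\delta}\setminus\{\infty\}$ is a field containing $\mathbb{Q}(j(E_{\delta})) = \mathbb{Q}(j(E))$ and closed under cubic equations, so the Weierstrass roots $e_{1},e_{2},e_{3}$ lie in $L_{\delta}$; hence $\hat{\pi}(E[2]) \subseteq L_{\delta}$ and $L[\hat{\pi}(E[2])] \subseteq L_{\delta}$ by minimality.

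The main obstacle is the reverse containment $L_{\delta} \subseteq L[\hat{\pi}(E[2])]$. Imitating Corollary~\ref{Corollary 10}, $L[\hat{\pi}(E[2])] \supseteq \hat{\pi}(E[\infty])$ contains $j(E_{\delta})=j(E)$, and solving the cubic and successive quadratics implicit in the formula for $j(E_{\delta})$ should produce $\delta$ itself. The subtlety is that this chain of extractions invokes the closure properties proved in Corollary~\ref{Corollary 7} for $L_{\delta}$, not yet established for $L[\hat{\pi}(E[2])]$. The cleanest resolution is to extend Theorem~\ref{Theorem 3} to seeds of the form $\hat{\pi}(E[2])$ for an arbitrary Weierstrass elliptic curve $E$, proving directly that $L[\hat{\pi}(E[2])]\setminus\{\infty\}$ is itself a field closed under quadratic and cubic roots; combined with the forward containment and the Galois-stability established above, equality with $L_{\delta}$, and hence the Galois field conclusion, follows formally.
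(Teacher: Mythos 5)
Your Galois-equivariance observations and the forward inclusion $L[\hat{\pi}(E[2])]\subseteq L_{\delta}$ are fine and agree with the paper's setup, but the argument is incomplete exactly where you flag it: the reverse inclusion $L_{\delta}\subseteq L[\hat{\pi}(E[2])]$, equivalently the field structure on $L[\hat{\pi}(E[2])]\backslash\{\infty\}$, is never established. Proposing to ``extend Theorem \ref{Theorem 3} to seeds $\hat{\pi}(E[2])$ of an arbitrary Weierstrass curve'' is not a repair but a restatement of the difficulty: being a field together with $\infty$ is not a M\"obius-invariant property, so for a general $4$-point seed the set $L[\hat{\pi}(E[2])]$ is a priori only a M\"obius image $g^{-1}(L_{\delta})$ of a field-plus-$\infty$, which need not itself be one; the hypothesis that $E$ is defined over $\mathbb{Q}(j(E))$ is precisely what rescues the statement, and your sketch does not use it at this point. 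Likewise, your attempt to imitate Corollary \ref{Corollary 10} needs $j(E)\in L[\hat{\pi}(E[2])]\backslash\{\infty\}$ and solvability of cubics and quadratics there, neither of which is available before the field property is known, as you concede.

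The paper closes this gap with a short transport argument that your proposal is missing. Choose $\delta$ with $j(E_{\delta})=j(E)$. Since $\mathbb{Q}(j(E))\subseteq L_{\delta}\backslash\{\infty\}$ and the Weierstrass cubic of $E$ has coefficients there, Corollary \ref{Corollary 7} gives $\hat{\pi}(E[2])\subseteq L_{\delta}$ (your forward step). Now, because $E$ and $E_{\delta}$ have the same $j$-invariant they are isomorphic over $\bar{\mathbb{Q}}$, so some linear fractional transformation $g$ carries $\hat{\pi}(E[2])$ onto $\hat{\pi}(E_{\delta}[2])=\{\pm\delta^{\pm1}\}$; as both $4$-point sets lie in $L_{\delta}$, the transformation $g$ has coefficients in the field $L_{\delta}\backslash\{\infty\}$ and hence maps $L_{\delta}$ bijectively onto itself. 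Since the construction $S\mapsto L[S]$ commutes with changes of coordinate on $\mathbb{P}^{1}$, we get $g\bigl(L[\hat{\pi}(E[2])]\bigr)=L[\{\pm\delta^{\pm1}\}]=L_{\delta}$, whence $L[\hat{\pi}(E[2])]=g^{-1}(L_{\delta})=L_{\delta}$, and the Galois conclusion follows at once from the preceding corollary, with no need for a new version of Theorem \ref{Theorem 3}. You should either reproduce this transport step or find a substitute for it; without it your proof does not go through.
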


\begin{proof}
Take $\delta$ such that $j(E)=j(E_{\delta})\in L_{\delta}$. By Corollary
\ref{Corollary 7}, $\hat{\pi}(E[2])\subseteq L_{\delta}$, so the
linear fractional transformation between $L[\hat{\pi}(E[2])]$ and
$L_{\delta}$ is defined over $L_{\delta}\backslash\{\infty\}$. Therefore,
$L[\hat{\pi}(E[2])]=L_{\delta}$.
\end{proof}
\textbf{Acknowledgments.} The authors are grateful to Prof. Kevin
Ford for indicating Remark \ref{Remark 5}. The first author was partially
supported by the Russian Academic Excellence Project `5-100', Simons
Fellowship, and EPSRC programme grant EP/M024830. The second author
was supported by the MacCracken Program offered by New York University.

\medskip{}

Fedor Bogomolov\\
Courant Institute of Mathematical Sciences, New York University\\
251 Mercer Street, New York, NY 10012, USA\\
Email: bogomolo@cims.nyu.edu

\medskip{}

Fedor Bogomolov\\
Laboratory of Algebraic Geometry and its Applications\\
National Research University Higher School of Economics\\
7 Vavilova Street, 117312 Moscow, Russia

\medskip{}

Hang Fu\\
Courant Institute of Mathematical Sciences, New York University\\
251 Mercer Street, New York, NY 10012, USA\\
Email: fu@cims.nyu.edu

\begin{thebibliography}{1}
\bibitem{MR3536148}\addcontentsline{toc}{chapter}{References}Fedor
Bogomolov and Hang Fu, Division polynomials and intersection of projective
torsion points. Eur. J. Math. 2 (2016), no. 3, 644\textendash 660.

\bibitem{2017}Fedor Bogomolov, Hang Fu, and Yuri Tschinkel, Torsion
of elliptic curves and unlikely intersections. \href{https://arxiv.org/abs/1706.01586}{arXiv:1706.01586}

\bibitem{MR2349648}Fedor Bogomolov and Yuri Tschinkel, Algebraic
varieties over small fields. Diophantine geometry, 73\textendash 91,
CRM Series, 4, Ed. Norm., Pisa, 2007.

\bibitem{MR0518817}Serge Lang, Elliptic curves: Diophantine analysis. Grundlehren der Mathematischen Wissenschaften, 231. Springer-Verlag, Berlin-New York, 1978. xi+261 pp. ISBN: 3-540-08489-4

\bibitem{MR2514094}Joseph Silverman, The arithmetic of elliptic curves.
Second edition. Graduate Texts in Mathematics, 106. Springer, Dordrecht,
2009. xx+513 pp. ISBN: 978-0-387-09493-9

\bibitem{2016}Wolfram Research, Inc., Mathematica, Version 11.0,
Champaign, IL (2016).

\bibitem{MR2918151}Umberto Zannier, Some problems of unlikely intersections
in arithmetic and geometry. With appendixes by David Masser. Annals
of Mathematics Studies, 181. Princeton University Press, Princeton,
NJ, 2012. xiv+160 pp. ISBN: 978-0-691-15371-1
\end{thebibliography}
\end{document}